\definecolor{blue}{rgb}{0.1,0.15,0.5}
\newcommand{\hnl}{\htmladdnormallink}
\DeclareSymbolFont{SY}{U}{psy}{m}{n}
\DeclareMathSymbol{\emptyset}{\mathord}{SY}{'306}
\renewcommand{\eqref}[1]{{\rm(\ref{#1})}}
\newcommand{\bbC}{{\mathbb C}}
\newcommand{\fA}{\mathfrak{A}}
\newcommand{\fL}{\mathfrak{L}}
\newcommand{\fQ}{\mathfrak{Q}}
\newcommand{\fX}{\mathfrak{X}}
\newcommand{\diag}{\mathop{\rm diag}}
\newcommand{\be}{\begin{equation}}
\newcommand{\ee}{\end{equation}}
\newtheorem{theorem}{Theorem}
\newtheorem{lemma}[theorem]{Lemma}
\newtheorem{proposition}[theorem]{Proposition}
\theoremstyle{definition}
\theoremstyle{remark}
\newtheorem{remark}[theorem]{Remark}
\begin{document}

\title[C\lowercase{omment on} `T\lowercase{he $\tan\theta$ theorem with relaxed conditions}']
{\large C\lowercase{omment on} `T\lowercase{he tan}$\,\theta$
\lowercase{theorem with relaxed conditions', by}
Y.\,N\lowercase{akatsukasa}}

\author[A. K. Motovilov]
{Alexander K. Motovilov}

\address{Alexander K. Motovilov, Bogoliubov Laboratory of
Theoretical Physics, JINR, Joliot-Cu\-rie 6, 141980 Dubna, Moscow
Region, Russia} \email{motovilv@theor.jinr.ru}


\subjclass[2010]{15A42, 65F15}

\keywords{Davis-Kahan,
tan\,$\theta$ theorem,
a posteriori tan\,$\theta$ theorem,
a priori tan\,$\theta$ theorem,
largest canonical angle,
largest principal angle}

\begin{abstract}
We show that in case of the spectral norm, one of the main results
of the recent paper \hnl{\textit{The tan $\theta$ theorem with
relaxed conditions}}{http://dx.doi.org/10.1016/j.laa.2011.08.038},
by Yuji Nakatsukasa, published in \textit{Linear Algebra and its
Applications} is a corollary of the $\tan\theta$ theorem proven in
[V. Kostrykin, K.\,A. Makarov, and A.\,K. Motovilov, \textit{On the
existence of solutions to the operator Riccati equation and the
tan\,$\theta$ theorem},  IEOT \hnl{\textbf{51}
(2005), 121 -- 140}{http://dx.doi.org/10.1007/s00020-003-1248-6}].
We also give an alternative finite-dimensional matrix formulation of
another $\tan\theta$ theorem proven in [S.\,Albeverio and
A.\,K.\,Motovilov, \textit{The a priori tan\,$\theta$ theorem for spectral
subspaces}, IEOT \hnl{\textbf{73} (2012),
413 –- 430}{http://dx.doi.org/10.1007/s00020-012-1976-6}].
\end{abstract}

\maketitle

In a recent paper \cite{Naka} published in \textit{Linear Algebra
and its Applications}, Y. Nakatsukasa obtains two bounds on the
tangent of the canonical angles between an approximate and an exact
spectral subspace of a Hermitian matrix. These bounds (see
\cite[Theorems 1 and 2]{Naka}) extend respectively the $\tan\theta$
theorem and the generalized $\tan\theta$ theorem proven by
C.\,Davis and W.\,M.\,Kahan in their celebrated paper~\cite{DK70}.
Actually, an extension of the $\tan\theta$ theorem similar to
\cite[Theorem~1]{Naka} has already been given in \cite{KMM3}, in the
wider context of the perturbation theory for self-adjoint operators
on a Hilbert space.

In our discussion below we restrict ourselves to the spectral norms
of the matrices involved, that is, by $\|S\|$ we always understand
the maximal singular value of a matrix $S$. If $\fA$ and $\fL$ are
subspaces of $\bbC^n$, the notation $\angle(\fA,\fL)$ is used for
the largest principal angle between $\fA$ and~$\fL$.

We begin with presenting a relevant finite-dimensional version of
the $\tan\theta$ theorem from \cite{KMM3} (see
\cite[The\-o\-rem~2]{KMM3}).

\begin{proposition}
\label{Prop1}
Assume that a Hermitian matrix $L\in\bbC^{n\times n}$ is
block partitioned in the form
\begin{equation}
\label{Lt}
L=\left[\begin{array}{cc}
A_1 & B^H \\
B & A_2
\end{array}
\right]
\end{equation}
with $A_1\in\bbC^{k\times k}$, $1<k<n$. Let the spectrum of $A_1$
lie in $(-\infty,\alpha-\delta]\cup[\beta+\delta,\infty)$, where
$\alpha\leq\beta$ and $\delta>0$. Suppose that $\fL_1$ and $\fL_2$ are
complementary orthogonal reducing subspaces of $L$ such that
$\dim(\fL_1)=k$ and the spectrum of the restriction
$L\bigr|_{\fL_2}$ of (the operator) $L$ on the reducing subspace
$\fL_2$ is confined in $[\alpha,\beta]$. Also, let $\fA_1$ be the
subspace of $\bbC^n$ spanned by the first $k$
columns of the identity matrix $I_n$. Then
\begin{equation}\label{TanKMM}
\tan\angle(\fA_1,\fL_1)\leq\frac{\|B\|}{\delta}.
\end{equation}
\end{proposition}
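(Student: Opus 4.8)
The plan is to exhibit $L$ as an off-diagonal perturbation of its own block-diagonal part and then invoke the operator $\tan\theta$ theorem of \cite{KMM3}, of which Proposition~\ref{Prop1} is merely the finite-dimensional translation.

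First I would set $A:=\diag(A_1,A_2)$ and $V:=L-A$; both matrices are Hermitian, $L=A+V$, and $V$ has zero diagonal blocks and off-diagonal blocks $B$ and $B^H$. The standard identity $\|V\|=\|B\|$ (the nonzero singular values of such an off-diagonal Hermitian matrix are precisely those of $B$) is immediate. Next I would note that $\fA_1$, together with its orthogonal complement $\fA_1^\perp$ (the span of the last $n-k$ columns of $I_n$), forms a pair of complementary reducing subspaces of $A$, with $A\bigr|_{\fA_1}=A_1$; consequently $\spec(A\bigr|_{\fA_1})=\spec(A_1)$ stays out of the open interval $(\alpha-\delta,\beta+\delta)$. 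Finally, $V$ is off-diagonal with respect to the decomposition $\bbC^n=\fA_1\oplus\fA_1^\perp$.

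With these observations the hypotheses of \cite[Theorem~2]{KMM3} are in force, taking $A$ as the unperturbed operator, $V$ as the perturbation, $L=A+V$ as the perturbed operator, and $\fA_1\oplus\fA_1^\perp$ as the orthogonal splitting: the distinguished unperturbed component $A\bigr|_{\fA_1}$ has spectrum disjoint from $(\alpha-\delta,\beta+\delta)$; $V$ is bounded, self-adjoint and off-diagonal with $\|V\|=\|B\|$; and $L$ possesses the complementary orthogonal reducing subspaces $\fL_1,\fL_2$ with $\dim\fL_1=k=\dim\fA_1$ and $\spec(L\bigr|_{\fL_2})\subseteq[\alpha,\beta]$. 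That theorem then yields $\tan\angle(\fA_1,\fL_1)\leq\|V\|/\delta=\|B\|/\delta$, which is exactly \eqref{TanKMM}. Here one uses that, since $\dim\fA_1=\dim\fL_1$, the largest principal angle $\angle(\fA_1,\fL_1)$ equals the norm of the operator angle between the two subspaces, so the operator-norm estimate of \cite{KMM3} reduces precisely to the scalar inequality \eqref{TanKMM}.

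The only genuinely delicate point — and the step I would verify most carefully — is the matching of hypotheses when descending from the Hilbert-space formulation of \cite[Theorem~2]{KMM3} to the present matrix setting: one must confirm that the ``relaxed'' separation condition there (only the \emph{perturbed} part of the spectrum is required to lie in the finite interval $[\alpha,\beta]$, while the \emph{unperturbed} distinguished part need only avoid the enlarged interval $(\alpha-\delta,\beta+\delta)$) is precisely the arrangement of $\spec(A_1)$ and $\spec(L\bigr|_{\fL_2})$ assumed here, and that the argument invokes no global spectral gap of $L$. The norm identity $\|V\|=\|B\|$ and the passage from the operator-angle bound to the largest-principal-angle inequality are routine.
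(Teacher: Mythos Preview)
Your overall strategy is exactly the paper's: Proposition~\ref{Prop1} is simply the finite-dimensional specialization of \cite[Theorem~2]{KMM3}, and one only has to match hypotheses and translate the operator-angle bound into the largest-principal-angle inequality.

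There is, however, one genuine gap. The bound supplied by \cite[Theorem~2]{KMM3} (together with \cite[Theorem~2.3]{KMM3}) is stated for the operator angle $\Theta$ between $\fA_2$ and $\fL_2$ \emph{under the additional hypothesis} that $\fL_2$ is the graph of a (bounded) operator from $\fA_2$ to $\fA_1$; equivalently, one needs $\fA_1\cap\fL_2=\{0\}$ (and then, by the equal-dimension assumption, $\fA_2\cap\fL_1=\{0\}$). You do not list this among the hypotheses you verify, and your ``delicate point'' paragraph focuses on the spectral separation arrangement rather than on this graph condition. Note that the condition is not cosmetic: if it failed, the largest principal angle would equal $\pi/2$ and \eqref{TanKMM} would be false, so establishing it is part of the content of the proposition. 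The paper supplies exactly this missing step via a short lemma: assuming $\fA_1\cap\fL_2\neq\{0\}$, one picks a nonzero $y\in\fA_1\cap\fL_2$, writes $y=\begin{bmatrix}x\\0\end{bmatrix}$, and compares $\|(L-cI_n)y\|$ for $c=(\alpha+\beta)/2$ computed two ways---once using the block form (giving a lower bound $\geq(\tfrac12(\beta-\alpha)+\delta)\|y\|$ from $\spec(A_1)$) and once using $y\in\fL_2$ (giving an upper bound $\leq\tfrac12(\beta-\alpha)\|y\|$ from $\spec(L|_{\fL_2})$)---to force $y=0$. With that in hand, your remaining reductions (the identity $\|V\|=\|B\|$ and the passage from $\|\Theta\|$ to $\angle(\fA_1,\fL_1)$) are indeed routine.
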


\begin{remark}
Actually, \cite[Theorem 2]{KMM3} (combined with \cite[Theorem
2.3]{KMM3}) suggests the equivalent bound
\,$\delta\tan\|{\Theta}\|\leq {\|B\|}$\, for the operator angle
${\Theta}$ between the orthogonal complements $\fA_2$ and $\fL_2$ of
the subspaces $\fA_1$ and $\fL_1$, respectively, provided that
$\fL_2$ is the graph of an operator from $\fA_2$ to $\fA_1$. But the
latter, in the finite-dimensional case under consideration, holds
true automatically. This is seen from the following lemma.
\end{remark}

\begin{lemma}
Assume the hypothesis of Proposition \ref{Prop1}. Then
$\fA_2\cap\fL_1=\fA_1\cap\fL_2=\{0\}$ and, hence, the reducing
subspace $\fL_2$ is the graph of an operator from $\fA_2$ to $\fA_1$.
\end{lemma}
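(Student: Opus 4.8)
The plan is to reduce the whole statement to the single assertion $\fA_1\cap\fL_2=\{0\}$ and to prove that assertion by a midpoint‑shift norm estimate. For the reduction: the subspaces $\fA_1$ and $\fA_2$ are spanned by complementary subsets of the columns of $I_n$, so $\fA_2=\fA_1^\perp$; by hypothesis $\fL_1$ and $\fL_2$ are orthogonal complements of one another as well; and $\dim\fA_1=\dim\fL_1=k$ while $\dim\fA_2=\dim\fL_2=n-k$. Hence $\dim(\fA_1+\fL_2)=k+(n-k)-\dim(\fA_1\cap\fL_2)$, so the equality $\fA_1\cap\fL_2=\{0\}$ by itself forces $\fA_1+\fL_2=\bbC^n$, and passing to orthogonal complements gives $\fA_2\cap\fL_1=(\fA_1+\fL_2)^\perp=\{0\}$ as well. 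Granting $\fA_1\cap\fL_2=\{0\}$, the subspace $\fL_2$ sits inside the orthogonal direct sum $\bbC^n=\fA_2\oplus\fA_1$ with $\dim\fL_2=\dim\fA_2$ and $\fL_2\cap\fA_1=\{0\}$, so the projection of $\fL_2$ onto $\fA_2$ along $\fA_1$ is injective, hence bijective, which is exactly the assertion that $\fL_2$ is the graph of a unique linear operator from $\fA_2$ to $\fA_1$.

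The substantive step is thus $\fA_1\cap\fL_2=\{0\}$. Set $\gamma:=(\alpha+\beta)/2$ and $c:=(\beta-\alpha)/2\ge0$, and take any $x\in\fA_1\cap\fL_2$. On the one hand, $\fL_2$ reduces $L$ and $\spec(L|_{\fL_2})\subseteq[\alpha,\beta]$, so $(L-\gamma I_n)x=(L|_{\fL_2}-\gamma)x$ and therefore $\|(L-\gamma I_n)x\|\le c\,\|x\|$. On the other hand, in the block partition \eqref{Lt} the vector $x\in\fA_1$ has the form $(u,0)$ with $u\in\bbC^k$, so the first $k$ components of $(L-\gamma I_n)x$ are exactly $(A_1-\gamma I_k)u$; since $\spec(A_1)\subseteq(-\infty,\alpha-\delta]\cup[\beta+\delta,\infty)$, every eigenvalue of the Hermitian matrix $A_1-\gamma I_k$ has modulus at least $c+\delta$, whence $\|(L-\gamma I_n)x\|\ge\|(A_1-\gamma I_k)u\|\ge(c+\delta)\,\|u\|=(c+\delta)\,\|x\|$. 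Comparing the two inequalities gives $(c+\delta)\|x\|\le c\|x\|$, i.e. $\delta\|x\|\le0$, and since $\delta>0$ we conclude $x=0$.

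I do not foresee a genuine obstacle. The only points needing a little care are the identification $\|(L-\gamma I_n)x\|=\|(L|_{\fL_2}-\gamma)x\|\le c\|x\|$, which relies on $\fL_2$ being a reducing (not merely invariant) subspace, and the routine check that the leading $k$ components of $(L-\gamma I_n)x$ equal $(A_1-\gamma I_k)u$ when $x=(u,0)$. It is worth emphasizing that this direct estimate uses no information about $\spec(L|_{\fL_1})$ and nothing about the $(2,2)$‑block $A_2$, whose spectrum is not controlled by the hypothesis; this is precisely why the companion equality $\fA_2\cap\fL_1=\{0\}$ must be extracted from $\fA_1\cap\fL_2=\{0\}$ by the complementation argument above rather than attempted directly.
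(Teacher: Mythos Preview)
Your argument is correct and follows essentially the same approach as the paper: both proofs establish $\fA_1\cap\fL_2=\{0\}$ by the same midpoint-shift estimate (shift by $(\alpha+\beta)/2$, compare the lower bound coming from the spectral gap for $A_1$ with the upper bound coming from $\spec(L|_{\fL_2})\subseteq[\alpha,\beta]$), and then pass to $\fA_2\cap\fL_1=\{0\}$ and to the graph statement by dimension considerations. The only difference is cosmetic: where the paper invokes the canonical decomposition of \cite[Theorem~2.2]{KMM2} to obtain $\dim(\fA_2\cap\fL_1)=\dim(\fA_1\cap\fL_2)$ and then \cite[Theorem~3.2]{KMM2} for the graph conclusion, you obtain both by the elementary identities $(\fA_1+\fL_2)^\perp=\fA_2\cap\fL_1$ and a direct injectivity-of-projection argument, making your write-up self-contained.
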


\begin{proof}
By the hypothesis, the dimensions of the subspaces $\fL_1$ and $\fA_1$
coincide, $\dim (\fL_1)=\dim (\fA_1)=k$. Then by using the canonical
orthogonal decomposition of $\bbC^n$ with respect to the orthogonal
projections onto $\fA_1$ and $\fL_1$ (see, e.g. \cite[Theorem
2.2]{KMM2}) one verifies that
\begin{equation}
\label{dims}
\dim(\fA_2\cap\fL_1)=\dim(\fA_1\cap\fL_2).
\end{equation}
Suppose that
$\fA_1\cap\fL_2\neq\{0\}$. In such a case, there is a vector $y\in\fL_2$ of the
form $y=\left[\hspace*{-0.5ex}\begin{array}{c} x \\
0_{n-k}\end{array}\hspace*{-0.5ex}\right]$, where the lower subcolumn
$0_{n-k}$ consists of exactly $n-k$ zeros and the upper subcolumn $x$ contains
at least one nonzero element. For $c=(\alpha+\beta)/2$ one then obtains
$$
\|(L-cI_n)y\|^2=\|(A_1-cI_k)x\|^2+\|Bx\|^2\geq\|(A_1-cI_k)x\|^2\geq
\bigl(\mbox{$\frac{1}{2}$}(\beta-\alpha)+\delta\bigr)^2\|y\|^2,
$$
since $\|y\|=\|x\|$ and the spectrum of $A_1$ belongs to
$(-\infty,\alpha-\delta]\cup[\beta+\delta,\infty)$. On the other hand, for
$y\in\fL_2$ we should have
$\|(L-cI_n)y\|\leq \mbox{$\frac{1}{2}$}(\beta-\alpha)\|y\|$ since the spectrum
of the restriction $L\bigr|_{\fL_2}$ lies in $[\alpha,\beta]$. Hence, $y=0$, a
contradiction, which yields $\fA_1\cap\fL_2=\{0\}$. Taking into
account \eqref{dims} one concludes that also $\fA_2\cap\fL_1=\{0\}$.
Applying \cite[The\-o\-rem 3.2]{KMM2} completes the proof.
\end{proof}

Now we show that for the spectral norm the $\tan\theta$
theorem proven in \cite{Naka} is a corollary of Pro\-po\-si\-ti\-on
\ref{Prop1}. We reproduce the corresponding statement
from \cite{Naka} in the following form  (see \cite[Theorem~1]{Naka}).

\begin{proposition}[\cite{Naka}]
\label{Prop2}
Let $A\in\bbC^{n\times n}$ be
a Hermitian matrix. Let $X=[X_1\,\,X_2]$ be a unitary eigenvector matrix
of $A$ with $X_1\in\bbC^{n\times k}$, $1<k<n$, so that
$X^HAX=\diag(\Lambda_1,\Lambda_2)$ is diagonal and $\Lambda_1$ has
$k$ columns. Assume that the columns of a matrix $Q_1\in\bbC^{n\times k}$ are
orthonormal and let $R=AQ_1-Q_1A_1$, where $A_1=Q_1^HAQ_1$.
Furthermore, assume that for some $\alpha\leq\beta$ and $\delta>0$ the spectrum of $A_1$
lies in $(-\infty,\alpha-\delta]\cup[\beta+\delta,\infty)$ and the spectrum of $\Lambda_2$
belongs to $[\alpha,\beta]$. Then
\begin{equation}
\label{tanN}
\tan\angle(\fQ_1,\fX_1)\leq\frac{\|R\|}{\delta},
\end{equation}
where $\fQ_1$ and $\fX_1$ are the subspaces spanned by the columns
of $Q_1$ and $X_1$, respectively.
\end{proposition}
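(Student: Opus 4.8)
The plan is to deduce the bound \eqref{tanN} from Proposition \ref{Prop1} by means of a single unitary change of basis that brings $A$ into the block form \eqref{Lt}. First I would complete the isometry $Q_1$ (note $Q_1^HQ_1=I_k$) to a unitary matrix $Q=[Q_1\,\,Q_2]$ with $Q_2\in\bbC^{n\times(n-k)}$, and set $L:=Q^HAQ$. Then $L$ is Hermitian and, since $A$ is Hermitian,
\[
L=\left[\begin{array}{cc} Q_1^HAQ_1 & Q_1^HAQ_2 \\ Q_2^HAQ_1 & Q_2^HAQ_2\end{array}\right]=\left[\begin{array}{cc} A_1 & B^H \\ B & A_2\end{array}\right],
\]
where $B:=Q_2^HAQ_1$ and $A_2:=Q_2^HAQ_2$; this has exactly the shape required in \eqref{Lt}. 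Using the resolution of the identity $Q_1Q_1^H+Q_2Q_2^H=I_n$ I would then observe that $AQ_1=Q_1A_1+Q_2B$, so that $R=AQ_1-Q_1A_1=Q_2B$; since the columns of $Q_2$ are orthonormal, this immediately gives $\|R\|=\|B\|$.

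The second step carries the spectral data of $A$ over to $L$. The eigenspaces $\fX_1=\ran(X_1)$ and $\ran(X_2)$ of $A$ are mutually orthogonal reducing subspaces, hence their images $\fL_1:=\ran(Q^HX_1)$ and $\fL_2:=\ran(Q^HX_2)$ under $Q^H$ are complementary orthogonal reducing subspaces of $L$, with $\dim(\fL_1)=k$; moreover $L|_{\fL_2}$ is unitarily equivalent (via the orthonormal columns of $Q^HX_2$) to multiplication by $\Lambda_2$, so $\spec(L|_{\fL_2})\subseteq[\alpha,\beta]$. Since $A_1=Q_1^HAQ_1$ is literally the $(1,1)$-block of $L$ and its spectrum lies in $(-\infty,\alpha-\delta]\cup[\beta+\delta,\infty)$ by hypothesis, all assumptions of Proposition \ref{Prop1} hold for $L$ together with these $\fL_1,\fL_2$. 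Writing $\fA_1$ for the span of the first $k$ columns of $I_n$ as in that proposition, I obtain $\tan\angle(\fA_1,\fL_1)\leq\|B\|/\delta$.

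It remains to translate the angle back. Since $Q_1$ consists of the first $k$ columns of $Q$, one has $Q^H\fQ_1=\fA_1$, while $Q^H\fX_1=\fL_1$ by construction; as $Q^H$ is unitary it leaves all principal angles invariant, so $\angle(\fQ_1,\fX_1)=\angle(\fA_1,\fL_1)$, and combining this with $\|R\|=\|B\|$ produces \eqref{tanN}. I do not expect a serious obstacle: the whole content of the argument is the reduction itself --- the single observation that the unitary conjugation $A\mapsto Q^HAQ$ simultaneously produces the block structure \eqref{Lt}, transports the reducing subspaces, preserves the principal angles, and, through $R=Q_2B$, identifies $\|R\|$ with the off-diagonal block norm $\|B\|$ to which Proposition \ref{Prop1} applies. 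Everything else is routine verification.
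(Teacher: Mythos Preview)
Your proposal is correct and follows essentially the same route as the paper's own proof: complete $Q_1$ to a unitary $Q$, set $L=Q^HAQ$, transport the reducing subspaces $\fX_1,\fX_2$ to $\fL_1,\fL_2$ via $Q^H$, apply Proposition~\ref{Prop1}, and then use unitary invariance of the principal angles together with $\|R\|=\|B\|$ to conclude. The only cosmetic difference is that the paper records $B=Q_2^HR$ while you record $R=Q_2B$; both yield $\|R\|=\|B\|$ immediately.
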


\begin{proof}
Assume that $Q_1$ is a submatrix of a unitary $n\times n$ matrix
$Q=[Q_1\,\,Q_2]$ and let $L=Q^HAQ$. The matrix $L$ has the form
\eqref{Lt} with $A_1=Q_1^HAQ_1$, $A_2=Q_2^HAQ_2$, and $B=Q_2^HAQ_1$.
Since $A$ is unitarily equivalent to the diagonal matrix
$\Lambda=\diag(\Lambda_1,\Lambda_2)$, the same is true for $L$.
Moreover, the $k$-dimensional subspace $\fL_1=Q^H\fX_1$ and its
orthogonal complement $\fL_2=\bbC^n\ominus\fL_1$ are reducing
subspaces of $L$. The spectrum of the restriction $L\bigr|_{\fL_2}$
coincides with the spectrum of $\Lambda_2$ and, hence, it lies in
$[\alpha,\beta]$. If the subspace $\fA_1$ is as in Proposition
\ref{Prop1}, then, just  by this
proposition, the largest principal angle between $\fA_1$ and $\fL_1$
satisfies the bound \eqref{TanKMM}. Meanwhile, the subspaces
$\fQ_1$ and $\fX_1$ are obtained from $\fA_1$ and $\fL_1$ by the
same unitary transformation: $\fQ_1=Q\fA_1$ and $\fX_1=Q\fL_1$.
Hence, $\angle(\fQ_1,\fX_1)=\angle(\fA_1,\fL_1)$. Observing that
$B=Q^H_2(AQ_1-Q_1A_1)=Q^H_2R$,\, one infers $\|B\|=\|R\|$ and then
\eqref{TanKMM} implies \eqref{tanN}.
\end{proof}

\begin{remark}
In its turn, Proposition  \ref{Prop1} may be viewed as a particular
version of Proposition \ref{Prop2} for the case where $[Q_1\,\,Q_2]$ is
taken equal to the identity matrix $I_n$. Thus,
in fact these two propositions are equivalent to each other.
\end{remark}

We next note that there is another sharp $\tan\theta$ bound
established in \cite[Theorem 1]{AM2010} (see also \cite[Theorem 2]{MotSel} for
an earlier result). The following
assertion represents a finite-dimensional version of
\cite[Theorem~1]{AM2010} reformulated in the style of Proposition
\ref{Prop2}.

\begin{proposition}
\label{Prop3} Let $A\in\bbC^{n\times n}$ be a Hermitian matrix and
$Q=[Q_1\,\,Q_2]$ a unitary matrix with $Q_1\in\bbC^{n\times k}$,
$1<k<n$. Assume that for some $a\leq b$ and $d>0$ the spectrum of
$A_1=Q_1^HAQ_1$ lies in $(-\infty,a-d]\cup[b+d,\infty)$ and that the
spectrum of $A_2=Q_2^HAQ_2$ belongs to $[a,b]$. Let $R=AQ_1-Q_1A_1$
and suppose that $\|R\|<\sqrt{2}d$. Then $n$ orthonormal
eigenvectors of $A$ may be numbered in such an order that the
corresponding unitary eigenvector matrix $X=[X_1\,\,X_2]$ with
$X_1\in\bbC^{n\times k}$ reduces $A$ to the diagonal form
$X^HAX=\diag(\Lambda_1,\Lambda_2)$ with $\Lambda_1\in\bbC^{k\times
k}$ having its spectrum in $(-\infty,a-d]\cup[b+d,\infty)$, and with
$\Lambda_2$ having all its eigenvalues in $[a-\delta_R,b+\delta_R]$,
where
$\delta_R=\|R\|\tan\left(\frac{1}{2}\arctan\frac{2\|R\|}{d}\right)<d$.
Moreover,
\begin{equation}
\label{tanA}
\tan\angle(\fQ_1,\fX_1)\leq\frac{\|R\|}{d},
\end{equation}
where $\fQ_1$ and $\fX_1$ are the subspaces spanned by the columns
of $Q_1$ and $X_1$, respectively.
\end{proposition}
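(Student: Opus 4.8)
The plan is to mimic the reduction used in the proof of Proposition~\ref{Prop2} and then to invoke the a priori $\tan\theta$ theorem \cite[Theorem~1]{AM2010}. First I would set $L=Q^HAQ$; this is a Hermitian matrix of the block form \eqref{Lt} with $A_1=Q_1^HAQ_1$, $A_2=Q_2^HAQ_2$, and $B=Q_2^HAQ_1$, and the identity $B=Q_2^H(AQ_1-Q_1A_1)=Q_2^HR$ gives $\|B\|=\|R\|$. By hypothesis $\spec(A_1)\subset(-\infty,a-d]\cup[b+d,\infty)$ while $\spec(A_2)\subset[a,b]$, so in the orthogonal decomposition $\bbC^n=\fA_1\oplus\fA_2$ (with $\fA_1$ spanned by the first $k$ columns of $I_n$, exactly as in Proposition~\ref{Prop1}) the matrix $L$ is a block operator matrix whose diagonal part $\diag(A_1,A_2)$ has the spectra of its entries separated by distance at least $d$, and whose off-diagonal part has norm $\|B\|=\|R\|<\sqrt2\,d$.

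This is precisely the setting of \cite[Theorem~1]{AM2010}. Applying that theorem to $L$ would yield a reducing subspace $\fL_1$ of $L$ that is the graph of a strict contraction from $\fA_1$ to $\fA_2$ (hence $\dim(\fL_1)=\dim(\fA_1)=k$), together with the spectral enclosures $\spec\bigl(L\bigr|_{\fL_1}\bigr)\subset(-\infty,a-d]\cup[b+d,\infty)$ and $\spec\bigl(L\bigr|_{\fL_2}\bigr)\subset[a-\delta_R,\,b+\delta_R]$, where $\fL_2=\bbC^n\ominus\fL_1$ and $\delta_R=\|R\|\tan\bigl(\frac12\arctan\frac{2\|R\|}{d}\bigr)<d$, and finally the bound $\tan\angle(\fA_1,\fL_1)\le\|B\|/d=\|R\|/d$ for the largest principal angle (equivalently, $d\tan\|\Theta\|\le\|R\|$ for the operator angle $\Theta$ between $\fA_2$ and $\fL_2$).

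It then remains to transform back through $Q$. Since $L$ is Hermitian and $\fL_1,\fL_2$ are complementary reducing subspaces of dimensions $k$ and $n-k$, I can choose an orthonormal eigenbasis of $L$ whose first $k$ vectors span $\fL_1$ and whose last $n-k$ vectors span $\fL_2$; multiplying these columns by $Q$ produces a unitary eigenvector matrix $X=[X_1\,\,X_2]$ of $A=QLQ^H$ with $X_1\in\bbC^{n\times k}$, $\fX_1:=\Ran(X_1)=Q\fL_1$, and $X^HAX=\diag(\Lambda_1,\Lambda_2)$, where $\Lambda_1$ (resp.\ $\Lambda_2$) carries the eigenvalues of $L\bigr|_{\fL_1}$ (resp.\ $L\bigr|_{\fL_2}$). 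This yields at once the asserted spectral localizations of $\Lambda_1$ and $\Lambda_2$. Finally $\fQ_1=\Ran(Q_1)=Q\fA_1$ and $\fX_1=Q\fL_1$ are images of $\fA_1$ and $\fL_1$ under the same unitary map, so $\angle(\fQ_1,\fX_1)=\angle(\fA_1,\fL_1)$, and the bound of the preceding paragraph gives \eqref{tanA}.

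The block-matrix bookkeeping and the unitary invariance of principal angles are routine and would be handled exactly as in the proof of Proposition~\ref{Prop2}. The one point that genuinely needs care — and is the main obstacle — is the faithful translation of \cite[Theorem~1]{AM2010}, stated there for (possibly unbounded) self-adjoint operators on a Hilbert space, into the present matrix language: one has to check that its separation hypothesis corresponds to ``$\spec(A_1)$ in the two half-lines around the gap $(a-d,b+d)$ with $\spec(A_2)\subset[a,b]$'', that its smallness threshold is precisely $\|R\|<\sqrt2\,d$, that the graph subspace it delivers is the one whose part of $L$ remains in the two half-lines and has the correct dimension $k$, and that its operator-angle conclusion transcribes to the principal-angle bound \eqref{tanA} — for which one may use the correspondence between the operator angle and the largest principal angle recorded in \cite[Theorem~2.3]{KMM3}, together with the canonical orthogonal decomposition of \cite[Theorem~2.2]{KMM2}.
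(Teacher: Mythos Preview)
Your approach is essentially the paper's: pass to $L=Q^HAQ$, identify $\|B\|=\|R\|$, invoke the a~priori result for $L$, and pull back through $Q$ using unitary invariance of principal angles. The only discrepancy is attributional: the paper draws the spectral enclosures for $\Lambda_1,\Lambda_2$ (in particular the explicit $\delta_R=\|R\|\tan\bigl(\tfrac12\arctan\tfrac{2\|R\|}{d}\bigr)$) from \cite[Theorem~2]{KMM4} and reserves \cite[Theorem~1]{AM2010} for the angle bound, whereas you propose to extract both from \cite{AM2010}; so the ``point that genuinely needs care'' you flag is precisely where the paper falls back on \cite{KMM4}.
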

\begin{proof}
The matrix $L=Q^HAQ$ has the form \eqref{Lt} with $A_1$ and $A_2$
defined in the hypothesis, and $B=Q^H_2AQ_1$. As in the proof of
Proposition \ref{Prop2} we have $\|B\|=\|R\|$. Hence
$\|B\|<\sqrt{2}d$ and then the statement on the eigenvalue matrix
$\Lambda$ and, in particular, on the spectral inclusions for
$\Lambda_1$ and $\Lambda_2$, is an immediate corollary of
\cite[Theorem 2]{KMM4}. Furthermore, for the case under
consideration, the bound from \cite[estimate (1.3) in Theorem
1]{AM2010} may be equivalently written as
\,$d\tan\angle(\fA_1,\fL_1)\leq{\|R\|}$, where $\fA_1$ is as in
Proposition \ref{Prop1} and $\fL_1$ is the spectral subspace of $L$
associated with the set $(-\infty,a-d]\cup[b+d,\infty)$. By the
unitarity argument we already used in the proof of Proposition
\ref{Prop2}, the bound \,$d\tan\angle(\fA_1,\fL_1)\leq{\|R\|}$\, implies
the bound~\eqref{tanA}.
\end{proof}

\begin{remark}
In general, condition $\|R\|<\sqrt{2}d$ cannot be removed. If this
condition is violated, the matrix $A$ may not have eigenvalues in
the interval $(a-d,b+d)$ at all (see \cite[Example 1.6]{KMM4}).
\end{remark}

If we estimate $\angle(\fQ_1,\fX_1)$ by using inequality
\eqref{tanA}, no knowledge on the exact eigenvalues of $A$ is
required. Unlike the bound \eqref{tanN}, the estimate \eqref{tanA}
involves the separation distance $d$ between the respective
eigenvalue sets of the matrices $A_1$ and $A_2$.  In applications,
these sets are usually treated as an approximate spectrum of $A$ and
their separation distance is assumed to be known prior to further
calculations. Following \cite{MotSel} and \cite{AM2010}, it is
appropriate thus to call the bound \eqref{tanA} the \textit{a priori}
$\tan\theta$ theorem. Similarly, the bound \eqref{tanN} may be
called the \textit{(semi-)a posteriori} $\tan\theta$ theorem since it
involves the separation distance $\delta$ between one approximate
and one exact spectral sets.

\vspace*{2mm} \noindent {\small{\bf Acknowledgments.} The author gratefully
acknowledges financial support of his work by the Deutsche
For\-sch\-ungs\-gemeinschaft (DFG) and by the Russian Foundation for
Basic Research.}


\end{document}